\definecolor{brightpink}{rgb}{1.0, 0.0, 0.5}
\date{August 2023}
\theoremstyle{plain}
\newtheorem{theorem}{Theorem}[section]
\newtheorem{corollary}[theorem]{Corollary}
\newtheorem{question}[theorem]{Question}
\theoremstyle{definition}
\newtheorem{definition}[theorem]{Definition}
\DeclareMathOperator{\R}{\mathbb{R}}
\DeclareMathOperator{\C}{\mathbb{C}}
\DeclareMathOperator{\N}{\mathbb{N}}
\author{Ilijas Farah}\address[Farah]
{Department of Mathematics and Statistics\\ York  University\\ To\-ron\-to, Ontario M3J 1P3\\ CANADA}
\email{ifarah@yorku.ca}
\author{Jeffrey Marshall-Milne}\address[Marshall-Milne]
{Department of Mathematics and Statistics\\ McMaster University\\ Ha\-mil\-ton, Ontario L8S 4L8\\ CANADA}
\email{marshj16@mcmaster.ca}
\thanks{Partially supported by NSERC}
\title[Chain Conditions and the Axiom of Choice]{Chain Conditions and the Axiom of Choice}
\keywords{Axiom of Choice, Chain conditions, Metric spaces, Hilbert spaces}
\begin{document}

\begin{abstract}
Within the framework of Zermelo-Fraenkel set theory without the Axiom of Choice, we establish equivalents to the assertion "the union of a countable collection of finite sets is countable" in the context of metric spaces, probability theory, and Hilbert spaces. The categorical principle underlying these equivalences is identified.
\end{abstract} 

\maketitle

\section{Introduction}

Adding to the large body of literature on choice principles and their equivalents (see \cite{A2} and \cite{A4}), we work in Zermelo-Fraenkel set theory without the Axiom of Choice (abbreviated ZF). Modern treatments of real analysis make heavy use of the Axiom of Choice (AC). As a result, we frequently encounter theorems which may themselves be viewed as choice principles. A typical example is the equivalence of the Hahn-Banach Extension Theorem and the Boolean Prime Ideal Theorem. The program of this paper is to establish more such theorem/choice-principle pairs, typically for fairly weak choice principles.

As we will see, these arise very naturally when studying chain conditions. A topological space is said to satisfy the \textit{countable chain condition} (or it is simply called ccc) if there does not exist an uncountable collection of pairwise disjoint open sets. Given a cardinal~$\kappa$, there is also the concept of a $\kappa$-cc topological space, where every collection of pairwise disjoint open sets has cardinality $<\kappa$. A Boolean algebra is said to be ccc if the associated Stone space is ccc. A standard ZF+AC argument shows that a Boolean algebra which admits a probability measure satisfies a rather strong form of the countable chain condition, called the $\sigma$-bounded chain condition.

The utility of the countable chain condition (see e.g., \cite{A5}) dates back at least to P.J. Cohen's proof of the independence of the Continuum Hypothesis from ZF+AC. It is commonly used to prove that a forcing notion does not collapse cardinals or change cofinalities when one passes from a model of ZF+AC to its forcing extension. When AC is at our disposal, the countable chain condition admits many useful equivalents in different contexts. For example, the countable chain condition, separability, and second countability are all equivalent conditions for metric spaces. The situation becomes more complicated when we omit the Axiom of Choice (see \cite{A6}). 

Serving as a jumping-off point for this project, D.H. Fremlin's Exercise 561Yc \cite{A3} shows that these equivalences need not hold when AC fails. In part iii of this exercise, Fremlin notes that the existence of so called "Russell socks" implies the existence of a complete and totally bounded metric space which is non-compact and does not satisfy the countable chain condition (and is therefore neither separable nor second-countable). This observation is made precise in \ref{t30}, and is generalized in \ref{t7}. In part iv of the same exercise, Fremlin notes that the existence of Russell socks further implies the existence of a probability algebra which is not ccc. \ref{t16} makes this implication precise, whereas \ref{t15} establishes a similar observation in the context of Hilbert spaces. The categorical principle unifying \ref{t30}, \ref{t16}, and \ref{t15} is given by \ref{z1}.

This paper is inspired by \cite{i1}, \cite{i3}, and \cite{i2}. Of note is the closely related work found in \cite{c3}, \cite{c4}, \cite{c2}, and \cite{c1}. \cite{d1} and \cite{A2} were indispensable resources in the writing of this paper.

\subsection*{Acknowledgment}
 We would like to thank Asaf Karagila for valuable comments on an early version of this paper, as well as his words of encouragement. This paper would not have found its current form without his support.

\section{Metric Spaces}

Let $\kappa$ be a cardinal number (not necessarily well-ordered or infinite).
\begin{definition}
    A topological space is said to satisfy the \textit{$\kappa^\leq$-chain condition} if every collection of pairwise disjoint open sets has cardinality~$\leq \kappa$.
\end{definition}

Note that the case $\kappa = \aleph_0$ corresponds to the countable chain condition. In presence of the Axiom of Choice, the $\kappa^\leq$-chain condition reduces to the $\kappa^+$-chain condition, where $\kappa^+$ is the least cardinal greater than $\kappa$. However, in section 3 we will consider the $\kappa^\leq$ chain condition for cardinals which are not necessarily well-ordered $\kappa$.  Naturally, a metric space is called $\kappa$-cc (or $\kappa^\leq$-cc) if it is $\kappa$-cc (respectively, $\kappa^\leq$-cc) as a topological space. Assertion ii of \ref{t7} is a standard theorem of ZF+AC. \ref{t7} makes its relationship to the Axiom of Choice precise.

\begin{theorem}\label{t7}
Let $\kappa$ be an infinite well-ordered cardinal. The following are equivalent.
\begin{itemize}
    \item[\textnormal{i}] If $X=(X_1, X_2, ...)$ is a sequence of sets such that $|X_n|< \kappa$ for all $n\in \N$, then $|\bigcup_n X_n|\leq \kappa$. 
    \item[\textnormal{ii}] Suppose $Y$ is a metric space such that for every $\epsilon > 0$, there exists a set of points $F \subseteq Y$ satisfying $|F| < \kappa$ and $Y = \cup_{x\in F}B(x, \epsilon)$ (where $B(x, \epsilon):= \{y\in Y : d(x, y)<\epsilon\}$). Then $Y$ satisfies the $\kappa^\leq$-chain condition.
\end{itemize}
\end{theorem}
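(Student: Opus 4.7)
The plan is to prove the two directions separately: (i) $\Rightarrow$ (ii) by a covering argument that decomposes any pairwise disjoint family of open sets into countably many layers, each bounded using a single existentially-guaranteed net; and (ii) $\Rightarrow$ (i) by contrapositive, constructing a totally bounded ultrametric space from a sequence witnessing the failure of (i).

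For (i) $\Rightarrow$ (ii), I would let $Y$ satisfy the $<\kappa$-total-boundedness hypothesis and fix a pairwise disjoint family $\mathcal{U}$ of nonempty open sets. The idea is to partition
\[
\mathcal{U} = \bigcup_{n \geq 1} \mathcal{U}_n, \qquad \mathcal{U}_n := \{U \in \mathcal{U} : \exists y \in Y,\ B(y, 1/n) \subseteq U\},
\]
which is valid because every nonempty open set contains some open ball. For fixed $n$, I would invoke the hypothesis once to produce a $1/n$-net $F$ with $|F| < \kappa$; since $\kappa$ is an aleph, $F$ is well-orderable. For $U \in \mathcal{U}_n$, any witness $y$ with $B(y, 1/n) \subseteq U$ forces some $x \in F$ to satisfy $d(x, y) < 1/n$, so $x \in B(y, 1/n) \subseteq U$ and hence $F \cap U \neq \emptyset$. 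Since the $U$'s are pairwise disjoint, sending $U$ to the least element of $F \cap U$ in a fixed well-ordering of $F$ defines an injection $\mathcal{U}_n \hookrightarrow F$, giving $|\mathcal{U}_n| \leq |F| < \kappa$. Applying (i) to $(\mathcal{U}_n)_{n \geq 1}$ then yields $|\mathcal{U}| \leq \kappa$.

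For (ii) $\Rightarrow$ (i), I would argue the contrapositive. Given $(X_n)_{n \geq 1}$ with $|X_n| < \kappa$ and $|\bigcup_n X_n| \not\leq \kappa$, I would first replace $X_n$ by $X_n \times \{n\}$ to make them disjoint, then set $Y := \{*\} \cup \bigcup_n X_n$ with
\[
d(x, y) := \begin{cases} 0 & \text{if } x = y,\\ 1/n & \text{if } x \neq y,\ y = *,\ x \in X_n,\\ 1/\min(n, m) & \text{if } x \neq y,\ x \in X_n,\ y \in X_m. \end{cases}
\]
A direct computation using $\min(a, c) \geq \min(a, b, c) = \min(\min(a, b), \min(b, c))$ shows $d$ is in fact an ultrametric. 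For any $\epsilon > 0$, picking $N > 1/\epsilon$ makes $\{*\} \cup \bigcup_{n \leq N} X_n$ an $\epsilon$-net of cardinality $< \kappa$ (finite unions of sets of size $<\kappa$ have size $<\kappa$ when $\kappa$ is an infinite aleph), so $Y$ satisfies the hypothesis of (ii). However, for $x \in X_n$, every other point of $Y$ lies at distance exactly $1/\min(n,\cdot) \geq 1/n$ from $x$, so $B(x, 1/n) = \{x\}$. The family $\{\{x\} : x \in \bigcup_n X_n\}$ is therefore a pairwise disjoint family of open sets of cardinality $|\bigcup_n X_n| \not\leq \kappa$, contradicting the $\kappa^\leq$-chain condition.

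The central subtlety, and the step most vulnerable to a hidden use of choice, is the bound on $|\mathcal{U}_n|$ in the forward direction: one must resist the temptation to pre-select a sequence $(F_n)$ of nets — which would tacitly invoke countable choice over a potentially enormous family of nets — and instead handle each $n$ with a one-shot existential appeal to the hypothesis, letting (i) itself absorb the one countable assembly that genuinely occurs. The reverse construction is essentially Fremlin's Russell-socks example (Exercise 561Yc in \cite{A3}), carried out in sufficient generality to accommodate arbitrary infinite well-ordered $\kappa$.
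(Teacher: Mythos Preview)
Your proof is correct. The forward direction is essentially identical to the paper's: both decompose the antichain as $\bigcup_n \mathcal{C}_n$ according to the radius of a ball contained in each member, inject each $\mathcal{C}_n$ into a single well-ordered $1/n$-net, and then invoke (i) once on the sequence $(\mathcal{C}_n)$. Your explicit remark about not pre-selecting a sequence of nets is exactly the point the paper handles implicitly.

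For the reverse direction you take a genuinely different route. The paper adjoins a basepoint $0$ to each $X_n$, forms the product $Y=\prod_n (X_n\cup\{0\})$ with the weighted sum of discrete metrics, and exhibits the cylinder sets $\{0\}^{n-1}\times\{x\}\times\prod_{i>n}Y_i$ as the required disjoint open family. You instead take the disjoint union $\bigcup_n X_n$ together with a single accumulation point $*$, equipped with the ultrametric $d(x,y)=1/\min(n,m)$ for $x\in X_n$, $y\in X_m$; the disjoint family is then simply the singletons $\{x\}=B(x,1/n)$. Your space is smaller and the antichain more transparent, and the ultrametric makes the verification of total boundedness and openness immediate. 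The paper's product construction, on the other hand, is the one that generalizes cleanly to the categorical framework it develops afterward (where the same product-of-pointed-factors idea is reused for probability algebras and Hilbert spaces).
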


If we further assume that $\kappa$ has uncountable cofinality (i.e. every unbounded subset is uncountable), we get the following.

\begin{theorem}\label{69} If $\kappa$  is a well-ordered cardinal with  uncountable cofinality then the following are equivalent.
\begin{itemize}
    \item[\textnormal{i}] If $X=(X_1, X_2, ...)$ is a sequence of sets such that $|X_n|< \kappa$ for all $n\in \N$, then $|\bigcup_n X_n|< \kappa$. 
    \item[\textnormal{ii}] Suppose $Y$ is a metric space such that for every $\epsilon > 0$, there exist a set of points $F \subseteq Y$ such that $|F| < \kappa$ and $Y = \cup_{x\in F}B(x, \epsilon)$. Then $Y$ satisfies the $\kappa$-chain condition.
\end{itemize}
\end{theorem}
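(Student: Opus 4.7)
The plan is to adapt the proof of Theorem \ref{t7} by replacing $\leq \kappa$ with $< \kappa$ throughout, leveraging the strict-inequality version of (i) and, where needed, the uncountable cofinality of $\kappa$.

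For (i) $\Rightarrow$ (ii), I would fix a pairwise disjoint family $\mathcal{U}$ of open sets in $Y$ and set
\[
\mathcal{V}_n := \{U \in \mathcal{U} : B(y, 1/n) \subseteq U \text{ for some } y \in U\}
\]
for each $n \in \N$. Given any $(1/(2n))$-net $F$ of cardinality $< \kappa$ (which exists by hypothesis and is automatically well-orderable, since $\kappa$ is an aleph), a standard halving argument produces a point of $F \cap U$ for each $U \in \mathcal{V}_n$; the map $U \mapsto \min(F \cap U)$ under the induced well-ordering of $F$ is then an injection $\mathcal{V}_n \to F$, by disjointness of $\mathcal{U}$. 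Hence $|\mathcal{V}_n| < \kappa$. Every nonempty $U \in \mathcal{U}$ lies in some $\mathcal{V}_n$, so $\mathcal{U} \setminus \{\emptyset\} = \bigcup_n \mathcal{V}_n$, and (i) gives $|\mathcal{U}| < \kappa$.

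For (ii) $\Rightarrow$ (i), I would construct the ``hedgehog'' metric space $Y := \{\ast\} \sqcup \bigsqcup_n X_n$ with $d(\ast, x) = 1/n$ for $x \in X_n$, $d(x, y) = 1/n$ for distinct $x, y \in X_n$, and $d(x, y) = 1/n + 1/m$ whenever $x \in X_n$ and $y \in X_m$ with $n \neq m$. After routinely verifying the triangle inequality, I would observe that for any $\epsilon > 0$ and any $N$ with $1/N < \epsilon$, the set $\{\ast\} \cup \bigcup_{n \leq N} X_n$ is an $\epsilon$-net of cardinality $< \kappa$ (using the ZF fact that a finite union of sets of cardinality $< \kappa$ has cardinality $< \kappa$ for an infinite well-ordered cardinal $\kappa$). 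So (ii) applies and $Y$ is $\kappa$-cc. Since every point of $Y$ distinct from $x \in X_n$ is at distance $\geq 1/n$ from $x$, each singleton $\{x\}$ is the open ball $B(x, 1/n)$; the family $\{\{x\} : x \in \bigcup_n X_n\}$ is then a pairwise disjoint family of open sets of cardinality $|\bigcup_n X_n|$, so the $\kappa$-cc yields $|\bigcup_n X_n| < \kappa$.

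I expect no serious obstacles, since the structure closely parallels Theorem \ref{t7}. The main point of care is avoiding (countable) choice in the first direction, which is handled by noting that the bound $|\mathcal{V}_n| < \kappa$ is a purely existential statement that follows from the existence of a single $(1/(2n))$-net — no sequence of nets needs to be simultaneously fixed. The uncountable cofinality hypothesis is used implicitly: without it, (i) would fail trivially by taking a cofinal $\omega$-sequence from $\kappa$.
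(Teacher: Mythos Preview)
Your proposal is correct. The forward direction is essentially the paper's argument (the paper uses a $1/n$-net rather than a $1/(2n)$-net, since if $B(y,1/n)\subseteq U$ and $d(y,f)<1/n$ then already $f\in U$; the halving is harmless but unnecessary).

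The backward direction, however, uses a genuinely different construction. The paper builds the product space $Y=\prod_n (X_n\cup\{0\})$ with the weighted sup-type metric and exhibits the cylinder sets $\{0\}^{n-1}\times\{x\}\times\prod_{i>n}Y_i$ as the required pairwise disjoint open family. Your hedgehog $\{\ast\}\sqcup\bigsqcup_n X_n$ is more elementary: the $\epsilon$-net is visibly $\{\ast\}\cup\bigcup_{n\leq N}X_n$, and the disjoint open family consists simply of the singletons $\{x\}=B(x,1/n)$. What the paper's product construction buys is that it is the same gadget later recast in the categorical framework of Theorem~\ref{z1} (and it generalises more readily to the probability and Hilbert space settings), whereas your hedgehog buys a shorter, more transparent verification with no need to handle a product metric or argue that cylinders are open. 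Both routes rest on the same ZF fact that a finite product (or union) of sets of size $<\kappa$ has size $<\kappa$ for an infinite well-ordered $\kappa$.
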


The proofs of these two theorems are sufficiently similar to be presented simultaneously. 

\begin{proof}[Proof of \ref{t7} and \ref{69}] ($\Rightarrow$) Assume i holds. Let $Y$ be a metric space satisfying the hypothesis of~ii. Let~$\mathcal{C}$ be a collection of pairwise disjoint non-empty open subsets of $Y$, and let
\begin{align*}
    \mathcal{C}_n = \{ U\in \mathcal{C} : \exists x\in U \text{ such that } B(x, 1/n) \subseteq U \}. 
\end{align*}
Clearly $\mathcal{C}=\bigcup_n \mathcal{C}_n$. To see that $|\mathcal{C}_n|< \kappa$, let $F\subseteq Y$ be such that $|F| < \kappa$ and $Y = \cup_{y\in F}B(y, 1/n)$. Endow $F$ with a well-order. By the construction of $F$, the set $F \cap U$ is non-empty for all $U \in \mathcal{C}_n$. Let $j:\mathcal{C}_n \rightarrow F$ be the map defined by letting $j(U)\in F$ be the minimal element satisfying $j(U)\in U$. Then $j$ is an injection because the members of $\mathcal{C}_n$ are pairwise disjoint. So $|\mathcal{C}_n|< \kappa$. By i, $|\mathcal{C}|=|\bigcup_n\mathcal{C}_n|\leq \kappa$ (if $\kappa$ has uncountable cofinality, this becomes $|\mathcal{C}|<\kappa$), as desired.

($\Leftarrow$) Assume ii holds. Let $X = (X_1, X_2, ...)$ be as in the hypothesis of i. Assume without loss of generality that the $X_i$'s are pairwise disjoint, and that $0 \notin \bigcup_n X_n$. Let $Y_n= X_n \cup \{0\}$ with the discrete metric, and let $Y = \prod_{n\in \N} Y_n$ with the product metric $d(\overline{x}, \overline{y})=\sum_{i=1}^\infty 2^{-i}d_i(x_i, y_i)$. To see that $Y$ satisfies the hypothesis of ii, let $\epsilon > 0$ and choose $n\in \N$ such that $2^{-n} < \epsilon$. Let $F = \{\overline{x} \in Y : \forall i>n, x_i=0\}$. Then the set of open $\epsilon$-balls with centers in $F$ covers $X$. Furthermore, $|F|=|Y_1 \times ... \times Y_n|$. But $Y_i < \kappa$ for all $i \in \N$, and $\kappa$ is an infinite well-ordered cardinal. Therefore, it follows from basic set-theoretic considerations (namely the fact that for infinite well-ordered cardinals $\xi$ and $\eta$ we have $\xi+\eta=\xi\eta=\max(\xi,\eta)$)  that $|Y_1 \times ... \times Y_n| < \kappa$. Ergo, $Y$ satisfies the hypothesis of ii.

To see that $|\bigcup_n X_n|\leq \kappa$ (or $|\bigcup_n X_n|< \kappa$ in the case that $\kappa$ has uncountable cofinality), for $x\in X_n$, let $U_x = \{0\}^{n-1}\times \{x\}\times \prod_{i > n}Y_i.$ Then $\mathcal{C}=\{U_x : x\in \bigcup_n X_n\}$ is a collection of pairwise disjoint open sets, so $|\mathcal{C}|\leq \kappa$ by ii (or $|\mathcal{C}|< \kappa$ if $\kappa$ has uncountable cofinality). But $\mathcal{C}$ is in one-to-one correspondence with $\bigcup_n X_n$ via $U_x \leftrightarrow x$.
\end{proof}

Recall that a metric space $X$ is said to be \textit{totally bounded} if for every $\epsilon > 0$, $X$ can be covered by finitely many open $\epsilon$-balls. Taking $\kappa = \aleph_0$, Assertion ii of \ref{t7} takes the form of a well-known ZF+AC theorem on totally bounded metric spaces. The following corollary (much in the spirit of P. Howard's Proposition 2.7 \cite{B1}) shows how this theorem may be viewed as a choice principle.

\begin{corollary}\label{t30}
The following are equivalent.
\begin{itemize}
    \item[\textnormal{i}] The union of a countable collection of finite sets is countable.
    \item[\textnormal{ii}] Every totally bounded metric space is ccc.
\end{itemize}
\end{corollary}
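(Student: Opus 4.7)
The plan is to observe that Corollary \ref{t30} is simply Theorem \ref{t7} specialized to the well-ordered cardinal $\kappa = \aleph_0$, so the proof reduces to verifying that the two statements of the corollary are the verbatim translations of the two conditions of \ref{t7} under this choice. Since $\aleph_0$ is an infinite well-ordered cardinal, the hypothesis of \ref{t7} is met, and nothing new needs to be proved.

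First I would unpack condition (i) of \ref{t7} at $\kappa = \aleph_0$. The inequality $|X_n| < \aleph_0$ says exactly that each $X_n$ is finite, and $|\bigcup_n X_n| \leq \aleph_0$ says exactly that the countable union is countable. So (i) of \ref{t7} with $\kappa = \aleph_0$ is the statement ``the union of a countable sequence of finite sets is countable,'' which is (i) of the corollary (the passage from ``sequence'' to ``collection'' is harmless, since any countable collection may be enumerated).

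Next I would translate condition (ii) of \ref{t7}. The hypothesis that for every $\epsilon > 0$ there is a set $F \subseteq Y$ with $|F| < \aleph_0$ such that $Y = \bigcup_{x \in F} B(x,\epsilon)$ is literally the definition of total boundedness: finite $\epsilon$-nets exist for every $\epsilon > 0$. The conclusion that $Y$ satisfies the $\aleph_0^{\leq}$-chain condition says that every family of pairwise disjoint nonempty open subsets of $Y$ has cardinality at most $\aleph_0$, which is exactly the countable chain condition. Hence (ii) of \ref{t7} at $\kappa = \aleph_0$ is (ii) of the corollary.

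There is no real obstacle: both implications are inherited directly from \ref{t7}. The only point worth flagging is that ``finite'' and ``countable'' are understood in the cardinal-arithmetic sense used throughout the paper, namely $|F| < \aleph_0$ and $|A| \leq \aleph_0$ respectively, so the translations above are exact and require no further choice-theoretic manoeuvring.
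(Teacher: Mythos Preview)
Your proposal is correct and matches the paper's approach exactly: the paper treats Corollary~\ref{t30} as the specialization of Theorem~\ref{t7} to $\kappa=\aleph_0$, noting only that the hypothesis of (ii) becomes total boundedness. Your careful unpacking of why the two conditions translate verbatim is more detailed than what the paper provides, but the underlying argument is the same.
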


\ref{t7}, \ref{69}, and \ref{t30} will serve as our prototypical examples in Section 3 as we generalize their proof to every class of partial orders satisfying certain conditions.

\section{The Categorical Perspective}

Given a partially ordered set (aka a poset) $P$, we say two elements $x, y \in P$ are \textit{incompatible} if they do not share a common lower bound (aside from a lower  bound for all of $P$, if it exists). An \emph{antichain} is a subset $\mathcal{C} \subseteq P$ consisting of pairwise incompatible elements. Given a cardinal $\kappa$, we can generalize the chain conditions defined for topological spaces to any partial order by stating that $P$ is $\kappa$-cc if every collection of pairwise incompatible elements has cardinality $<\kappa$. The $\kappa^\leq$-chain condition is similarly generalized, with the $\aleph_0^\leq$-chain condition again being referred to interchangeably as the countable chain condition. 

The following is a generalization of a notion introduced in \cite{horn1948measures} that will be discussed further at the end of the present section.  

\begin{definition}
    A poset $P$ is said to satisfy the \textit{$\sigma^\kappa$-chain condition} if there exists a countable collection of $\kappa$-cc subsets $P_1, P_2, ... \subseteq P$ such that $P=\bigcup_n P_n$. The case $\kappa = \aleph_0$ is called the $\sigma$-finite chain condition.
\end{definition}

We will say that  a cardinal $\kappa$ (not necessarily well-ordered) has \emph{uncountable cofinality} if a set of cardinality $\kappa$ cannot be covered by a countable family of sets of cardinality $<\kappa$ each. (This definition clearly does not depend on the choice of set of cardinality $\kappa$.) 
 If $\kappa$ is well-ordered, this condition is equivalent to every unbounded subset of~$\kappa$ being uncountable.

The starting point for Theorem~\ref{z1} below is the following simple observation. If the union of every family of $\aleph_0$ sets of cardinality $<\kappa$ each has cardinality $\leq\kappa$, then every poset satisfying the $\sigma^\kappa$ chain condition must also satisfy the $\kappa^\leq$ chain condition. If, in addition, $\kappa$ has uncountable cofinality, then every poset satisfying the $\sigma^\kappa$ chain condition must also satisfy the $\kappa$ chain condition.

\begin{theorem}\label{z1}
Let $\kappa$ be a cardinal (not necessarily well-ordered or infinite), and let $\mathfrak{A}_\kappa$ denote the proper class of all sequences of pairwise disjoint non-empty sets $X=(X_1, X_2, ...)$ such that $|X_n| < \kappa$ for all $n\in \N$.
Suppose that $\mathfrak{B}$ is a family of posets which satisfy the $\sigma^\kappa$ chain condition. Further suppose that for every $X\in \mathfrak{A}_\kappa$, there is some $\mathcal F(X)\in \mathfrak{B}$ and an injection $f\colon \bigcup_n X_n\to \mathcal F(X)$ whose range is an antichain. 

Then every poset in $\mathfrak{B}$ is $\kappa^\leq$-cc if and only if given any countable collection of sets, each of cardinality $<\kappa$, their union has cardinality $\leq \kappa$.

If, in addition, $\kappa$ has uncountable cofinality, then every poset in $\mathfrak{B}$ is $\kappa$-cc if and only if given any countable collection of sets, each of cardinality $<\kappa$, their union has cardinality $< \kappa$.
\end{theorem}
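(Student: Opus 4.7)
The plan is to reduce both implications to the two simple observations made just before the theorem, following the template of the proof of Theorem~\ref{t7}. Both directions are essentially forced by unpacking the definitions, so the proof should be short.

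For the direction from the chain condition on $\mathfrak{B}$ to the union principle, I would start with an arbitrary sequence $X=(X_n)\in\mathfrak{A}_\kappa$. By hypothesis, there is some $\mathcal F(X)\in\mathfrak{B}$ together with an injection $f\colon\bigcup_n X_n\to\mathcal F(X)$ whose image is an antichain. Assuming every poset in $\mathfrak{B}$ is $\kappa^\leq$-cc, the antichain $f\bigl(\bigcup_n X_n\bigr)$ has cardinality $\leq\kappa$, and since $f$ is injective this gives $|\bigcup_n X_n|\leq\kappa$. The uncountable-cofinality version is the same argument with $\kappa^\leq$-cc replaced by $\kappa$-cc, yielding $|\bigcup_n X_n|<\kappa$. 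Note that this direction does not actually use the $\sigma^\kappa$-chain condition assumption on members of $\mathfrak{B}$; it only uses that $\mathcal F(X)\in\mathfrak{B}$ satisfies the chain condition we are assuming.

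For the converse, I would follow the observation stated immediately before the theorem. Let $P\in\mathfrak{B}$ and fix a witness $P=\bigcup_n P_n$ of the $\sigma^\kappa$-chain condition, so each $P_n$ is $\kappa$-cc. Given an antichain $\mathcal C\subseteq P$, the sets $\mathcal C_n:=\mathcal C\cap P_n$ are antichains in $P_n$ (incompatibility in $P$ restricts to incompatibility in $P_n$), hence $|\mathcal C_n|<\kappa$ for each~$n$. Applying the union principle to the sequence $(\mathcal C_n)_n$ yields $|\mathcal C|=|\bigcup_n\mathcal C_n|\leq\kappa$, establishing the $\kappa^\leq$-chain condition for $P$. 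In the uncountable-cofinality case, by definition of uncountable cofinality the countable union of sets of size $<\kappa$ has size $<\kappa$, so the same argument gives the $\kappa$-chain condition.

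The main obstacle is really just conceptual bookkeeping: making sure that (a) the antichain produced by $f$ in $\mathcal F(X)$ is an antichain in the sense appropriate to that poset, and (b) the countable union in the $\sigma^\kappa$-decomposition of $P$ is exactly the kind of countable family to which the assumed union principle applies. Both are immediate from the definitions, so no real difficulty arises.
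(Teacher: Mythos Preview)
Your proposal is correct and follows essentially the same route as the paper: one direction is the observation recorded just before the theorem (decompose an antichain via the $\sigma^\kappa$ witness and apply the union principle), and the other pushes $\bigcup_n X_n$ into an antichain of $\mathcal F(X)$ via $f$.

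One small point the paper makes explicit that you gloss over: in the direction from the chain condition to the union principle you start with $X\in\mathfrak{A}_\kappa$, but the conclusion is stated for an \emph{arbitrary} countable collection $(Y_n)$ of sets of size $<\kappa$, not necessarily pairwise disjoint or nonempty. The paper inserts the one-line reduction $X_n:=Y_n\setminus\bigcup_{j<n}Y_j$ to land in $\mathfrak{A}_\kappa$ (dropping empty terms) with $\bigcup_n X_n=\bigcup_n Y_n$; you should do the same before invoking $\mathcal F$ and $f$.
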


 To shed some light on Theorem \ref{z1}, note that Corollary~\ref{t30} corresponds to the special case where $\kappa = \aleph_0$ and $\mathfrak{B}$ is the proper class of all topologies (denoted $\tau(X)$) of totally bounded metric spaces~$X$, each partially ordered by set inclusion. In this case, $\mathcal{F}$ maps a sequence $X=(X_1, X_2, ...)$ to $\tau(\prod_n (X_n \cup \{X_n\}))$, and $f$  maps $x\in X_n$ to $(\prod_{i<n} \{X_n\}) \times \{x\} \times ( \prod_{i > n} (X_n \cup \{X_n\}))$ in  $\tau(\prod_n (X_n \cup \{X_n\})) = \mathcal{F}(X)$.

\begin{proof}[Proof of Theorem~\ref{z1}]  
		To prove the nontrivial implication, note that if $Y=(Y_1,Y_2,\dots)$ is a sequence of sets such that $|Y_n|<\kappa$ for all $n$, then $X_n=Y_n\setminus 	\bigcup_{j<n} Y_j$ defines an element of $\mathfrak{A}_\kappa$ such that $\bigcup_n X_n=\bigcup_n Y_n$. 
		  It therefore suffices to consider elements of $\mathfrak{A}_\kappa$. Let $X=(X_1, X_2, ...) \in \mathfrak{A}_\kappa$ and let $f:\bigcup_n X_n \rightarrow \mathcal{F}(X)$ be an injection whose image is an antichain. Since $\mathcal F(X)$ is $\kappa^\leq$-cc, we have $|f(\bigcup_n X_n)| \leq \kappa$. But $f(\bigcup_n X_n)$ is in bijection with $\bigcup_n X_n$, so $|\bigcup_n X_n|\leq \kappa$, as desired. 
		  
		  If $\kappa$ has uncountable cofinality, then for every $X\in \mathfrak{A}_\kappa$ we have $|\bigcup_n X_n|\leq \kappa$ if and only if $|\bigcup_n X_n|<\kappa$, and the desired equivalence reduces to the one proven for an arbitrary $\kappa$.  
		  \end{proof}

A typical example of a poset satisfying the $\sigma$-finite chain condition is the set of nonzero elements of a probability measure algebra. Setting~$P_n$ to be the family of all sets of measure $>1/n$, for $n\geq 2$, it is clear that~$P_n$ cannot contain an antichain of cardinality $n$. Since the cardinality of antichains in $P_n$ is uniformly bounded, we have an even stronger chain condition, called the $\sigma$-bounded chain condition. These two chain conditions were introduced by Horn and Tarski in the course of their study of measure algebras \cite{horn1948measures}. It took until \cite{thummel2014problem} to prove that the $\sigma$-bounded chain condition is stronger than the $\sigma$-finite chain condition. While the $\sigma$-finite chain condition implies the countable chain condition in the presence of the Axiom of Choice, we have the following corollary of Theorem~\ref{z1}, obtained by setting $\kappa=\aleph_0$. 

\begin{corollary}
The following are equivalent.
\begin{itemize}
	\item[\textnormal{i}] The union of a countable collection of finite sets is countable.
	\item[\textnormal{ii}] Every poset satisfying the $\sigma$-finite chain condition  is ccc. 
	\item[\textnormal{iii}] Every poset satisfying the $\sigma$-bounded chain condition is ccc.
\end{itemize}
\end{corollary}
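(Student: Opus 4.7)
The plan is to deduce this corollary as two instances of Theorem~\ref{z1} with $\kappa=\aleph_0$, bridged by a single elementary implication. First I would dispose of (ii)~$\Rightarrow$~(iii) by noting that the $\sigma$-bounded chain condition trivially strengthens the $\sigma$-finite chain condition: if $P=\bigcup_n P_n$ with antichains in each $P_n$ uniformly bounded by some finite $k_n$, then no $P_n$ admits an infinite antichain, so each $P_n$ is $\aleph_0$-cc.

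The remaining equivalences (i)~$\Leftrightarrow$~(ii) and (i)~$\Leftrightarrow$~(iii) reduce, via Theorem~\ref{z1}, to checking that each of the two candidate classes $\mathfrak{B}$ is rich enough to contain the required ``test posets'' $\mathcal{F}(X)$. Given any $X=(X_1,X_2,\dots)\in\mathfrak{A}_{\aleph_0}$, i.e.\ a sequence of pairwise disjoint nonempty finite sets, my plan is to take $\mathcal{F}(X):=\bigcup_n X_n$ equipped with the trivial partial order in which any two distinct elements are incomparable, and to witness the $\sigma$-finite (resp.\ $\sigma$-bounded) chain condition by the decomposition $P_n:=X_n$. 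Since each $X_n$ is finite, every antichain in $X_n$ has cardinality at most $|X_n|<\aleph_0$, so this exhibits $\mathcal{F}(X)$ as a member of both classes simultaneously. The identity map on $\bigcup_n X_n$ then provides the injection $f$ required by the hypothesis of Theorem~\ref{z1}, because every subset of $\mathcal{F}(X)$ is automatically an antichain.

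Feeding this construction into Theorem~\ref{z1} with $\mathfrak{B}$ taken to be the class of all posets satisfying the $\sigma$-finite chain condition yields (i)~$\Leftrightarrow$~(ii); rerunning the same argument with $\mathfrak{B}$ the (smaller) class of posets satisfying the $\sigma$-bounded chain condition yields (i)~$\Leftrightarrow$~(iii). Together with (ii)~$\Rightarrow$~(iii) established above, this closes the cycle and gives the three-way equivalence.

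I do not expect a genuine obstacle. The only point to verify carefully is that the flat antichain poset $\mathcal{F}(X)$ meets the stronger $\sigma$-bounded condition, which is immediate from the finiteness of each $X_n$; the rest is bookkeeping and a direct appeal to the already-proved Theorem~\ref{z1}.
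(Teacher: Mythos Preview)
Your proposal is correct and matches the paper's approach: the paper simply declares the corollary is obtained by setting $\kappa=\aleph_0$ in Theorem~\ref{z1}, and your flat-antichain construction for $\mathcal{F}(X)$ is exactly the natural instantiation needed to make that appeal go through. The separate verification of (ii)~$\Rightarrow$~(iii) is redundant once both equivalences with (i) are in hand, but harmless.
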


The rest of this paper is focused on deriving corollaries of \ref{z1} in the context of probability theory and Hilbert spaces.

\section{Probability}

Given a Boolean algebra $\textbf{B}=(B, \textbf{0}, \textbf{1}, \wedge, \vee)$ and elements $x, y \in B$, we write $x \leq y$ if $x \wedge y = x$. In this way, $B$ admits a canonical poset structure. $\textbf{B}$ is said to be ccc if the poset $B$ is ccc. Note that elements $x, y \in B$ are incompatible in the sense of posets if and only if $x \wedge y = \textbf{0}$. 

A \emph{probability measure algebra} is a pair $(\textbf{B}, \mu)$, where $\textbf{B}=(B, \textbf{0}, \textbf{1}, \wedge, \vee)$ is a Boolean algebra and $\mu: B \rightarrow [0, 1]$ is a finitely additive probability measure (i.e. $\mu(\textbf{1})=1$) assigning a positive measure to all non-zero elements. Note that we do not require $\textbf{B}$ to be complete. (Recall that a Boolean algebra is \emph{complete} if every nonempty ßsubset has a supremum.)    Given a probability algebra $(\textbf{B}=(B, \textbf{0}, \textbf{1}, \wedge, \vee), \mu)$, we define $s(\textbf{B}, \mu)=B$; this is a forgetful functor from the category of all probability algebras into the category of Boolean algebras.  As previously mentioned, when AC is at our disposal, every probability algebra is ccc. The following theorem makes the role of AC much more precise.

\begin{theorem}\label{t16} The following are equivalent.
\begin{itemize}
    \item[\textnormal{i}] The union of a countable collection of finite sets is countable.
    \item[\textnormal{ii}] Every probability algebra is ccc.
\end{itemize}
\end{theorem}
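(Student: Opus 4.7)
The plan is to invoke Theorem~\ref{z1} with $\kappa=\aleph_0$, taking $\mathfrak{B}$ to be the family of (posets underlying) probability algebras. Two ingredients are required: (a)~every probability algebra satisfies the $\sigma$-finite chain condition in ZF, and (b)~a ZF-definable assignment $X\mapsto \mathcal{F}(X)\in\mathfrak{B}$ together with an antichain-injection $f\colon \bigcup_n X_n\to \mathcal{F}(X)$ for each $X\in\mathfrak{A}_{\aleph_0}$. Once both are in hand, Theorem~\ref{z1} immediately delivers the equivalence, since at $\kappa=\aleph_0$ the condition ``$\kappa^{\leq}$-cc'' is precisely the countable chain condition.

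For (a), I would reproduce the Horn--Tarski argument already foreshadowed in the introduction. If $(\mathbf{B},\mu)$ is a probability algebra, set $P_n=\{x\in B:\mu(x)>1/n\}$ for $n\geq 2$. Any $n$ pairwise disjoint members of $P_n$ would have total measure strictly greater than $1$, contradicting $\mu(\mathbf{1})=1$; hence $P_n$ is $n$-cc. Since each nonzero element has positive measure, $B\setminus\{\mathbf{0}\}=\bigcup_{n\geq 2}P_n$, witnessing $\sigma$-boundedness (in particular $\sigma$-finiteness) without any appeal to choice.

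For (b), I would adapt the product construction used in the proof of~\ref{t7}. Given $X=(X_1,X_2,\dots)\in\mathfrak{A}_{\aleph_0}$, assume (as there) that $0\notin\bigcup_n X_n$, set $Y_n=X_n\cup\{0\}$ and $Y=\prod_n Y_n$, and let $\mathcal{F}(X)$ be the Boolean algebra of cylinders in $Y$, i.e.\ subsets of the form $A\times\prod_{i>N}Y_i$ with $A\subseteq\prod_{i\leq N}Y_i$, equipped with the uniform product measure
\[
    \mu\Bigl(A\times\prod_{i>N}Y_i\Bigr)=\frac{|A|}{\prod_{i\leq N}|Y_i|}.
\]
A standard bookkeeping verification (via common refinement of $N$) shows $\mu$ is well-defined, finitely additive, and strictly positive on nonempty cylinders, making $(\mathcal{F}(X),\mu)$ a probability algebra. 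For each $x\in X_n$ let
\[
    U_x=\{0\}^{n-1}\times\{x\}\times\prod_{i>n}Y_i,
\]
a nonzero cylinder; the $U_x$ are pairwise disjoint by the same coordinate-clash as in~\ref{t7}, giving the required antichain-injection $f\colon x\mapsto U_x$.

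The main obstacle I anticipate is nothing deeper than the vigilance needed to confirm that~(b) is genuinely ZF-definable: all Boolean operations on cylinders factor through the finite products $\prod_{i\leq N}Y_i$, and the formula for $\mu$ involves only finite cardinalities, so no form of choice enters the definition of either $\mathcal{F}(X)$ or $\mu$. Once (a) and (b) are assembled, Theorem~\ref{z1} yields Theorem~\ref{t16} without further work.
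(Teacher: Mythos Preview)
Your proposal is correct and follows the same overall architecture as the paper: invoke Theorem~\ref{z1} at $\kappa=\aleph_0$, verify the $\sigma$-bounded chain condition via the Horn--Tarski decomposition $P_n=\{x:\mu(x)>1/n\}$ (identical to the paper's argument), and exhibit a suitable $\mathcal F$ and antichain-injection.

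The only difference is in the construction for~(b). You recycle the product space $\prod_n(X_n\cup\{0\})$ from Theorem~\ref{t7} and take its cylinder algebra with the uniform product measure. The paper instead works directly with the power set of $\bigcup_n X_n$, assigning each point $a\in X_n$ the mass $|X_n|^{-1}2^{-n}$ and extending to arbitrary subsets by $\mu(S)=\sup_{F\subseteq S\text{ finite}}\sum_{a\in F}\mu(\{a\})$; the antichain-injection is then simply $a\mapsto\{a\}$. Both constructions are choice-free and do the job. The paper's is more economical (no product space, no cylinder bookkeeping, no well-definedness check across different $N$'s), while yours has the virtue of making the parallel with~\ref{t7} explicit and reusing a construction already verified there.
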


\begin{proof}
We employ the framework of \ref{z1} with $\kappa=\aleph_0$, and where $\mathfrak{B}$ is the proper class of all posets $s(\textbf{B}, \mu)$ (for $(\textbf{B}, \mu)$ ranging over all probability algebras).

($\Rightarrow$) Let $B=s(\textbf{B}, \mu)\in \mathfrak{B}$ and let $\mathcal{C}\subseteq B$ be an antichain. Without loss of generality, assume $\textbf{0} \notin \mathcal{C}$. Let
\begin{align*}
    \mathcal{C}_n = \bigg\{ x \in \mathcal{C} : \mu(x) > \frac{1}{n} \bigg\}
\end{align*}
so that $\mathcal{C}=\bigcup_n \mathcal{C}_n$. By additivity of $\mu$, we have $|\mathcal{C}_n| \leq n$ for all $n$, so $\mathcal{C}$ is a union of countably many finite sets. Thus i implies ii, as desired.

($\Leftarrow$) For this direction, we need to construct the map $\mathcal{F}: \mathfrak{A}_{\aleph_0} \rightarrow \mathfrak{B}$ required by \ref{z1}. Given a sequence of pairwise disjoint non-empty finite sets $X=(X_1, X_2, ...)\in \mathfrak{A}_{\aleph_0}$, define $\textbf{B}_X$ to be the Boolean algebra of all subsets of $\bigcup_n X_n$. For $a\in X_n$, define $\mu_X(\{a\})=|X_n|^{-1}2^{-n}$, and for an arbitrary subset $S \subseteq \bigcup_n X_n$, define
\begin{align*}
    \mu_X(S) = \sup_{F \subseteq S \text{ finite}}\sum_{a\in F}\mu_X(\{a\}).
\end{align*}
Then we define $\mathcal{F}$ by $\mathcal{F}(X) = s(\textbf{B}_X, \mu_X)$. To satisfy the hypothesis of Theorem~\ref{z1}, define $f: \bigcup_n X_n \rightarrow \mathcal{F}(X)$ by mapping $a\in \bigcup_n X_n$ to the singleton $\{a\} \in \mathcal{F}(X)$. By \ref{z1}, we conclude that ii implies i, as desired.
\end{proof}

\section{Hilbert Spaces}

In the presence of Countable Choice (that is, the assertion that every countable collection of nonempty sets admits a choice function, denoted $\text{AC}_\omega$), a metric space is Cauchy complete (i.e. every Cauchy sequence converges) if and only if every nested sequence of non-empty closed sets with vanishing diameter intersects in a singleton. Ergo, when working with $\text{AC}_\omega$, it is customary to generically call a metric space satisfying one of these two conditions "complete." However, in the absence of any form of choice, Cauchy completeness becomes a weaker condition. 
We follow  \cite{A1} and say that a metric space in which every nested sequence of non-empty closed sets with vanishing diameter intersects in a singleton is \emph{$\sigma$-complete} (some authors use the term \emph{Cantor complete}). As demonstrated in Section 2 of \cite{A1}, an inner product space which is $\sigma$-complete retains the fundamental properties of a Hilbert space such as the closest vector property, the existence of orthogonal complements, and the Riesz representation theorem. We follow \cite{A1} (Definition 1.0.1.ii) and say that an inner product space is a \emph{Hilbert space} if it is  $\sigma$-complete with respect to the induced norm.

It is straightforward to verify that, the theory of orthogonal projections being unchanged in the absence of choice, we can still obtain Bessel's inequality. Hilbert spaces and operators on Hilbert spaces in the absence of AC have been studied in \cite{BrunnerLinear} and \cite{BrunnerSB} from a slightly different point of view; see the discussion in 6.1.2 of \cite{A1}. 
%
%

Given a set $X$, we define $\ell^2(X)$ to be the set of all maps $f:X \rightarrow \C$ such that $\sum_{x\in X}|f(x)|^2<\infty$ (\cite{A1}, Definition 3.0.2). By equipping $\ell^2(X)$ with the obvious inner product (\cite{A1}, 3.0.3), one shows that $\ell^2(X)$ is a Hilbert space (\cite{A1}, Proposition 3.0.4) and that the collection of characteristic maps $\zeta_x: X \rightarrow \C$ of singletons $\{x\}\subseteq X$ forms an orthonormal basis (\cite{A1}, 3.0.5). In fact, Hilbert spaces constructed this way are the canonical Hilbert spaces with orthonormal basis, in the sense that given any Hilbert space $\mathcal{H}$ with orthonormal basis $B$, the map $B \ni \vec{x} \mapsto \zeta_{\vec{x}}\in \ell^2(B)$ extends to an isometric linear isomorphism $\mathcal{H} \rightarrow \ell^2(B)$ (\cite{A1}, 3.0.6). 

\begin{theorem}\label{t15}
The following are equivalent.
\begin{itemize}
    \item[\textnormal{i}] The union of a countable collection of finite sets is countable.
    \item[\textnormal{ii}] If $\mathcal{H}$ is a Hilbert space, $B \subseteq \mathcal{H}$ is an orthonormal subset, and $\vec{x}\in \mathcal{H}$, then the set
    \begin{align*}
        T:=\{\vec{e}\in B : \langle \vec{x} , \vec{e} \rangle \neq 0 \}
    \end{align*}
    is countable.
\end{itemize}
\end{theorem}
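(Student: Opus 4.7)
The plan is to derive the theorem as a direct instance of Theorem~\ref{z1} with $\kappa = \aleph_0$. Take $\mathfrak{B}$ to be the class of sets $T = \{\vec{e} \in B : \langle \vec{x}, \vec{e}\rangle \neq 0\}$, indexed by triples $(\mathcal{H}, B, \vec{x})$ consisting of a Hilbert space, an orthonormal subset, and a vector, each equipped with the discrete partial order in which any two distinct elements are incompatible. In this order every subset of $T$ is an antichain, so $T$ is $\aleph_0^\leq$-cc precisely when $T$ is countable, and the conclusion of Theorem~\ref{z1} then reads exactly as the equivalence I want.

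To apply Theorem~\ref{z1} I must check (a) that every such $T$ satisfies the $\sigma$-finite chain condition in ZF, and (b) produce an assignment $\mathcal{F}$ embedding an arbitrary $X \in \mathfrak{A}_{\aleph_0}$ antichain-wise into some member of $\mathfrak{B}$. For (a), assume $\vec{x} \neq 0$ (the case $\vec{x} = 0$ being trivial) and set $T_n = \{\vec{e} \in B : |\langle \vec{x}, \vec{e}\rangle|^2 > \|\vec{x}\|^2/n\}$. Then $T = \bigcup_{n \geq 1} T_n$, and Bessel's inequality applied to any finite $F \subseteq T_n$ yields $|F| \cdot \|\vec{x}\|^2/n < \sum_{\vec{e} \in F} |\langle \vec{x}, \vec{e}\rangle|^2 \leq \|\vec{x}\|^2$, so $|T_n| < n$. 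For (b), given pairwise disjoint non-empty finite sets $X_1, X_2, \ldots$, take $\mathcal{H} = \ell^2(\bigcup_n X_n)$, which is a Hilbert space by the cited Proposition~3.0.4 of \cite{A1}; let $B = \{\zeta_a : a \in \bigcup_n X_n\}$ be the canonical orthonormal basis; and define $\vec{x}(a) = 2^{-n} |X_n|^{-1/2}$ whenever $a \in X_n$. Since each $X_n$ is finite, $|X_n|$ is a well-defined element of $\N$, so this recipe needs no choice. The computation $\|\vec{x}\|^2 = \sum_n |X_n| \cdot 4^{-n}/|X_n| = 4/3$ places $\vec{x}$ in $\ell^2$, every coordinate is nonzero, and the resulting set $T$ is in bijection with $\bigcup_n X_n$ via the injection $a \mapsto \zeta_a$, whose image is automatically an antichain in the discrete order.

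The main obstacle is purely foundational. First, $\ell^2(Y)$ must be a Hilbert space (in the $\sigma$-complete sense adopted in the excerpt) for every set $Y$, including sets $Y$ which may fail to be well-orderable; this is the content of \cite{A1}. Second, Bessel's inequality for an arbitrary orthonormal set must be available in ZF, but because the inequality itself only concerns finite sub-sums it reduces to the finite-dimensional orthogonal projection argument and is unaffected by the absence of AC. Granted these two pieces, the forward and backward implications of the theorem follow immediately from the two directions of Theorem~\ref{z1} applied to the $\mathfrak{B}$ and $\mathcal{F}$ constructed above.
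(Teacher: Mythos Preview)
Your proof is correct and follows essentially the same approach as the paper: both verify the $\sigma$-finite chain condition via Bessel's inequality and construct the required $\mathcal{F}$ using $\ell^2\bigl(\bigcup_n X_n\bigr)$ with a square-summable vector having all coordinates nonzero. The only differences are cosmetic---you equip $T$ itself with the discrete order where the paper uses the powerset of $T$ under inclusion, and you carry out the $\sigma$-finite decomposition directly whereas the paper routes the forward implication through Theorem~\ref{t16}---but the underlying argument is identical.
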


When AC is at our disposal, ii is easily obtained as a theorem. \ref{t15} makes precise the extent to which the Axiom of Choice is needed to prove this theorem.

\begin{proof}[Proof of \ref{t15}] ($\Rightarrow$) Assume i holds. By \ref{t16}, every probability algebra is ccc. Let $\mathcal{H}$ be a Hilbert space, let $\vec{x}\in \mathcal{H}$, and let $B \subseteq \mathcal{H}$ be an orthonormal subset. Let
\begin{align*}
    N = \sup_{F \subseteq B \text{ finite}} \sum_{\vec{e}\in F} |\langle \vec{x}, \vec{e} \rangle|^2.
\end{align*}
Note that $N \leq ||\vec{x}||^2 < \infty$ by Bessel's inequality. If $N =0$, then clearly $T$ is empty and we are done. So assume $N \neq 0$. This allows us to turn the powerset of $T$ into a probability algebra by defining
\begin{align*}
    \mu(S) = \frac{1}{N} \sup_{F \subseteq S \text{ finite}} \sum_{\vec{e} \in F} |\langle \vec{x}, \vec{e} \rangle|^2
\end{align*}
for all $S \subseteq T$. By \ref{t16}, we conclude that the collection of singletons $\{\{\vec{e}\} : \vec{e} \in T\}$ is countable, as desired.

($\Leftarrow$) For this direction, we employ the framework of \ref{z1} in the case $\kappa = \aleph_0$. Given a triple $(\mathcal{H}, \vec{x}, B)$, where $\mathcal{H}$ is a Hilbert space, $\vec{x} \in \mathcal{H}$ is a vector, and $B\subseteq \mathcal{H}$ is an orthonormal subset, define 
\[
o(\mathcal{H}, \vec{x}, B) = \text{PowerSet}\{\vec{e} \in B : \langle \vec{x} , \vec{e} \rangle \neq 0\},
\] 
ordered by set inclusion. We take $\mathfrak{B}$ to be the proper class of all posets $o(\mathcal{H}, \vec{x}, B)$ (where $(\mathcal{H}, \vec{x}, B)$ ranges over all triples). Note that Assertion ii of \ref{t15} is equivalent to the assertion that every member of~$\mathfrak{B}$ is ccc. Next, we must construct a suitable map $\mathcal{F}: \mathfrak{A}_{\aleph_0} \rightarrow \mathfrak{B}$. Given $Y=(Y_1, Y_2, ...)\in \mathfrak{A}_{\aleph_0}$, we define $\mathcal{F}(Y) = o(\ell^2(\bigcup_n Y_n), \vec{x}_Y, B_Y)$, where $\vec{x}_Y \in \ell^2(\bigcup_n Y_n)$ is defined by $Y_n \ni a \mapsto |Y_n|^{-1}n^{-1} =\vec{x}_Y (a) \in \C$, and $B_Y$ is the standard orthonormal basis for $\ell^2(\bigcup_n Y_n)$. To see that the hypothesis of Theorem~\ref{z1} is satisfied, define the map $f: \bigcup_n Y_n \rightarrow \mathcal{F}(X)$ by $Y_n \ni a \mapsto \{\zeta_a\} = f(a) \in \mathcal{F}(X)$.
\end{proof}

Example 6.3.1 of \cite{A1} shows that the failure of i gives the existence of a Hilbert space with two orthonormal bases of different cardinalities.  It is not clear whether it also implies the existence of a Hilbert space with no orthonormal basis (6.1.2 of \cite{A2}; see however Theorem 6.2.3 of \cite{A1}, where such a space has been constructed from a particularly strong failure of i).  

\section{Conclusion} 
We see that chain conditions are closely related to the cardinality of countable unions. The ZF+AC proof that Cauchy complete metric spaces are $\sigma$-complete makes straightforward, seemingly indispensable use of the Axiom of Countable Choice ($\text{AC}_\omega$). It is well-known that the equivalence of sequential and topological continuity in the context of metric spaces implies $\text{AC}_\omega$. This suggests that theorems about sequential convergence in metric spaces are a good place to look for equivalents of $\text{AC}_\omega$. Moreover, the equivalence of the Baire Category Theorem on Cauchy complete metric spaces (abbreviated BCT) and the Axiom of Dependent Choice demonstrates that theorems about Cauchy complete metric spaces may imply rather strong forms of choice. We are lead to the following.

\begin{question} Can we use the equivalence of Cauchy completeness and $\sigma$-completeness in the context of metric spaces to derive the Axiom of Countable Choice?
\end{question}

Conversely, a typical ZF+AC proof that ccc metric spaces are second-countable (i.e. admit a countable base) uses Zorn's Lemma for posets in which every well-ordered chain is countable. This weak form of Zorn's Lemma (which we call $Z_{\aleph_1}$) turns out to be equivalent to a strong form of Dependent Choice, which, in turn, implies BCT. Given the vastness of the class of Cauchy complete metric spaces, it seems unlikely that a theorem about ccc metric spaces could be used to prove a theorem about Cauchy complete metric spaces. After all, unlike ccc metric spaces, Cauchy complete metric spaces may admit any number of pairwise disjoint non-empty open sets. Ergo, it is improbable that the arrow may be reversed in this case: using the fact that ccc metric spaces are second-countable to prove $Z_{\aleph_1}$. In the interest of viewing this property of ccc metric spaces as a choice principle, we are lead to the following. 

\begin{question} Can the equivalence of the countable chain condition and second-countability in the context of metric spaces be proven using a weaker form of choice than $Z_{\aleph_1}$?
	\end{question}

In \cite{A1} it is asked whether every Hilbert space is a subspace of a Hilbert space admitting an orthonormal basis (in the absence of any form of choice). A good place to start is by looking at spaces of the form $L^2(X, \mathcal{A}, \mu)$ (for measure spaces $(X,\mathcal{A},\mu)$), as this is an abundant source of everyday Hilbert spaces. To simplify matters even further, take $(X,\mathcal{A},\mu)$ to be a measure algebra (i.e. $\mathcal{A}$ is a Boolean algebra and $\mu$ is a finitely additive measure).

\begin{question} Is every space of the form $L^2(X,\mathcal{A},\mu)$ (for $(X,\mathcal{A},\mu)$ a measure algebra) a subspace of a Hilbert space admitting an orthonormal base?
\end{question}

We conclude with a question about the nature of the  specific failure of choice used in this paper. Recall that a set $X$ is Dedekind-finite if there is no injection from $\N$ into $X$. If there is a  Dedekind-finite set $X$ such that the family of all finite subsets of $X$ is not Dedekind-finite, then a simple argument shows that $X$ contains the union of an infinite family of disjoint, nonempty, finite sets. The union of this family is infinite and Dedekind-finite, and therefore not countable. 

\begin{question}
    Assume that the union of any countable collection of finite sets is countable. Can one prove that there is a Dedekind-finite set $X$ such that the set of all finite subsets of $X$ is not Dedekind-finite?
\end{question}



\end{document}